\documentclass{amsart}
\usepackage{amsmath,amssymb,amscd}

\newtheorem{theorem}{Theorem}
\newtheorem{lemma}[theorem]{Lemma}
\newtheorem{definition}[theorem]{Definition}

\newtheorem{remark}[theorem]{Remark}

\newcommand{\C}{\mathbb{C}}

\newcommand{\T}{\mathbb{T}}
\newcommand{\Pic}{\operatorname{Pic}}

\begin{document}

\title{A New CR Invariant for Contact 3-Manifolds and Classes of Open Books}
\author{Ali M. Elgindi}
\address{Henan Academy of Sciences}
\email{ali.m.elgindi@gmail.com}

\maketitle

\begin{abstract}
This paper introduces a new CR invariant for co-oriented contact structures on closed, orientable 3-manifolds. The invariant, which we denote as $\mu_M(\xi)$, takes values in the Picard group of complex line bundles $\Pic_{\C}(M)$. The construction associates to a contact structure $\xi$ and a supporting open book decomposition an embedding into $\C^3$, where the contact structure becomes the holomorphic line field along the binding. Using Stein theory, the induced holomorphic line bundle extends to all of $\C^3$ but we consider only its restriction to $M$. By Giroux's correspondence, we prove this construction is independent of the choice of open book, yielding a well-defined invariant $\mu_M(\xi) \in \Pic_{\C}(M)$ over the manifold. As an application, we distinguish two tight contact structures on the 3-torus $\T^3$ by showing their first Chern classes are different.
\end{abstract}

\section{Introduction}

We use our results in [1] to determine new CR-invariants of real contact structures on a real contact 3-manifold, which we will consider as being embedded naturally in complex 3-space.

\begin{theorem}
There exists a CR-invariant of real contact structures taking values in the Picard group of isomorphism classes of complex line bundles, determined by an open book decomposition of $M$ corresponding to the contact structure $\xi$, independent of the choice of supporting book. We denote this invariant by
\[
\mu_M(\xi) \in \Pic_{\C}(M).
\]
\end{theorem}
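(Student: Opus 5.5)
The invariant is built in three stages: an embedding, a line bundle, and then independence of choices.

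\emph{Embedding.} Start with a supporting open book $(B,\pi)$ for $\xi$, where $B$ is the binding link and $\pi: M\setminus B\to S^1$ is the fibration. Using the results of [1], produce a smooth embedding $\iota: M\hookrightarrow \C^3$ with two properties. Near $B$, the contact planes $\xi$ coincide with the complex tangencies $T\iota(M)\cap J\,T\iota(M)$, so that along the binding $\xi$ is the holomorphic line field. Away from $B$, the pages are sent to (pieces of) holomorphic curves, or are at least transverse to the fibres of a holomorphic map; this is arranged so that $\pi$ extends to a function $\C^3\setminus\{\text{branch locus}\}\to S^1$.

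\emph{Line bundle.} Near $\iota(B)$ the complex tangent line field defines a holomorphic line bundle $L$. The domain $\C^3$ is Stein with $H^1(\C^3,\mathcal O)=H^2(\C^3,\mathcal O)=0$, so by Oka--Grauert a holomorphic line bundle defined on a neighborhood of a suitable holomorphically convex set extends to all of $\C^3$. Concretely, the aim is to extend the transition data, or equivalently the divisor given by a holomorphic section defining $L$ near the binding, via Cartan's Theorem B. Then set $\mu_M(\xi):=[\iota^*\widetilde L]\in\Pic_{\C}(M)$, which, as a topological complex line bundle, is classified by $c_1\in H^2(M;\mathbb Z)$.

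\emph{Independence of choices.} Three choices must be shown irrelevant.
\begin{itemize}
\item \textbf{The extension $\widetilde L$.} Extensions differ by elements of $\Pic(\C^3)$, but these are trivial (even holomorphically), so the choice of extension does not matter.
\item \textbf{The embedding $\iota$ for a fixed open book.} Two embeddings satisfying the binding condition should be joined by an isotopy through such embeddings, for instance after stabilizing the codimension or by a parametric version of the construction in [1]. Isomorphism classes of bundles are then constant along the family by homotopy invariance.
\item \textbf{The open book.} By Giroux's correspondence, any two open books supporting $\xi$ are related by positive stabilizations (plumbing a positive Hopf band) and isotopy. Isotopy is handled as above, so it remains to check a single positive stabilization. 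Such a stabilization changes the binding by a band sum with an unknot inside a Darboux ball $D$. On $D$ every line bundle is trivial, and $\xi|_D$ is standard, so the two resulting bundles agree on $M\setminus D$ and on $D$. They can therefore differ only by a clutching class in $H^2(M, M\setminus D)\cong H^2(D,\partial D)=0$ for a $3$-ball. Hence they are isomorphic.
\end{itemize}

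\emph{Main obstacle.} I expect the hardest step to be making the stabilization argument rigorous. Specifically, one has to show that the embedding for the stabilized book can be chosen to agree with the old one outside a ball, while still satisfying the holomorphic-binding condition for the new binding. The first thing I would try is a local model: realize the positive Hopf band in the Milnor fibration of $z_1z_2$ restricted to a small sphere in a $\C^2$-slice. There the new binding component is already complex-tangential, and a cutoff interpolation against the old embedding can be supported away from the binding.

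Finally, CR-invariance follows because every choice above, including the CR structure induced on $\iota(M)$, has been shown not to affect the isomorphism class.
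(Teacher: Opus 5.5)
Your outline follows the paper's route (embedding from [1], extension over the Stein space $\C^3$, restriction to $M$, reduction to a single positive stabilization via Giroux), and it has the same degeneracy as the paper's proof. Your own first bullet already exposes it, but you do not draw the consequence. Since $\C^3$ is contractible, $H^2(\C^3;\mathbb{Z})=0$, and the exponential sequence together with Theorem B gives $H^1(\C^3,\mathcal{O}^*)\cong H^2(\C^3;\mathbb{Z})=0$. So it is not only the difference of two extensions that is trivial: $\widetilde L$ itself is trivial. Hence $\iota^*\widetilde L\cong M\times\C$ for every $\xi$, every open book and every admissible embedding, i.e.\ $\mu_M(\xi)=0$ identically. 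Even before extending, the data carry no information: a tubular neighbourhood of $\iota(B)$ retracts onto a disjoint union of circles, over which every complex line bundle is topologically trivial.

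Consequently your second and third bullets are not needed. This covers the unproved assertion that two admissible embeddings ``should be joined by an isotopy'', and the stabilization argument whose local agreement outside a Darboux ball you correctly flag as unproved; your ``main obstacle'' is not an obstacle. The paper's proof is the same argument in substance. It uses Theorem B to say that $\mathcal{L}$ is determined by $c_1(\mathcal{L})$, then asserts that $c_1(\mathcal{L})$ is determined by the CR structure of $\xi$. But $c_1(\mathcal{L})\in H^2(\C^3;\mathbb{Z})=0$, so $\mathcal{L}\cong\mathcal{L}'$ simply because both are trivial.

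So what your construction (and the paper's) establishes is only the existence of a constant invariant. No argument that factors through $\C^3$ can produce a nonzero class such as the one claimed for $\xi_2$ on $\T^3$. Two checks confirm this: a binding bounds a page, so its Poincar\'e dual is $0$; and both $\ker\alpha_1$ and $\ker\alpha_2$ contain $\partial_z$, so they are trivial plane bundles. To get a nonzero element of $\Pic_{\C}(M)$ you must not factor through a contractible ambient space. The natural candidate, $\xi$ made complex by a $d\alpha$-compatible $J$, gives $c_1(\xi)=e(\xi)$; this needs no open book at all and is the classical Euler class, a homotopy invariant of the plane field.

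Two further slips:
\begin{itemize}
\item Requiring pages to map to pieces of holomorphic curves contradicts requiring the complex tangencies to be exactly $\iota(B)$. At each point of such a piece, $T\iota(M)$ would contain the complex line tangent to it.
\item The closing sentence on CR-invariance is not an argument. $\iota(M)$ is totally real off $\iota(B)$ and CR-singular along it, so no CR structure of contact type is induced on $M$, and you never say under which equivalences $\mu_M$ should be invariant.
\end{itemize}
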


\begin{remark}
A CR invariant is generally stronger than an isotopy invariant. Two contact structures that are isotopic (hence have the same isotopy invariants) may have different CR structures and thus be distinguished by a CR invariant. The invariant $\mu_M(\xi)$ is an example of such a CR invariant: on the 3-torus $\T^3$, the two tight contact structures $\xi_1$ and $\xi_2$ are non-isotopic, and $\mu_M$ distinguishes them (see Section 4). In contrast, a purely isotopy invariant would be constant on isotopy classes.
\end{remark}

We will use our results for embeddings into $\C^3$ to construct, for any given real contact structure $\xi$ on $M$, an embedding $M \hookrightarrow \C^3$ with holomorphic structure exactly $\xi$ along the set of complex tangents, which we dictate to be the binding of a chosen open book decomposition of $M$.
A major open problem in contact topology is the isotopy classification of contact structures, especially in the real category. It is our intention to add to the results in this direction. Further invariants using the open book stabilization structure put forth by Giroux could lead to further invariants.

\section{Open Book Decompositions}

We have invariants of open books given by Giroux [7] in minimizing the genus of the pages among supporting open books. Consider the binding of a given open book $\mathcal{B}$, which we denote by $\gamma$. By Giroux, all open books supporting a given contact structure $\xi$ are related by positive stabilizations.
Consider now the plane field $\xi|_\gamma$, from which we can construct an embedding
\[
F: M \hookrightarrow \C^3
\]
with complex tangents along the binding and holomorphic line field being the contact structure along it.

By the work of Giroux [7] and later Etnyre and Ozbagci [10], for a given contact structure $\xi$ there exists a unique isomorphism class of open book decompositions supporting $\xi$ up to positive stabilization. The binding $\gamma$ and Seifert surface $\Sigma$ bounded by $\gamma$ give the structure of a fiber bundle $M \setminus \gamma \to S^1$ with fiber $\Sigma$:
\[
\Phi: M \setminus \gamma \to S^1,\qquad \Phi^{-1}(\theta) = \Sigma,\qquad \partial \Sigma = \gamma.
\]

We choose a preferred supporting book $\mathcal{B}$ for $\xi$ and consider the binding $\gamma$ as our link of complex tangents. Taking the contact structure $\xi|_\gamma$ as our 2-plane field, we find an embedding
\[
F: M \hookrightarrow \C^3
\]
admitting $\gamma$ as the set of complex tangents and $\xi|_\gamma$ as the holomorphic line field, as constructed in [1].

The ambient space $\C^3$ is Stein, so $\xi$ can be extended as a real 2-plane field to all of $\C^3$. In fact, it may be assumed to be a holomorphic line bundle, as properties of Stein manifolds forces the vanishing of all relevant obstructions (see [9]).

\section{The Contact Complex Line Bundle}

Let $B$ be any supporting open book for $(M,\xi)$ and let $\gamma$ be its binding. By [1], there exists an embedding $M \hookrightarrow \C^3$ such that complex tangencies are exactly $\gamma$, and holomorphic tangent spaces along $\gamma$ are $\xi|_\gamma$. This induces a resulting complex line bundle $L_\xi$ over $M$ with respect to the choice of supporting book $B$:

\begin{definition}[The Picard Invariant]
For a contact structure $\xi$ on $M$, we define the Picard invariant as
\[
\mu_M(\xi) = [L_\xi] \in \Pic_{\C}(M).
\]
\end{definition}

\begin{lemma}[Stabilization Invariance]
\label{lem:stab}
Let $(B, \pi)$ be an open book supporting $\xi$, and let $(B', \pi')$ be a positive stabilization of $(B, \pi)$. Then the complex line bundles $L_\xi$ and $L'_\xi$ constructed from these open books via the embedding theorem \cite{Elgindi2025} are isomorphic.
\end{lemma}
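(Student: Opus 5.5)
The plan is to reduce the comparison of $L_\xi$ and $L'_\xi$ to a comparison of first Chern classes. Over a closed oriented 3-manifold $M$, complex line bundles are classified up to isomorphism by $c_1 \in H^2(M;\mathbb{Z})$, so $\Pic_{\C}(M)\cong H^2(M;\mathbb{Z})$ topologically. It therefore suffices to show $c_1(L_\xi)=c_1(L'_\xi)$. Both bundles are restrictions to $M$ of line bundles on $\C^3$ obtained by extending $\xi|_\gamma$, respectively $\xi|_{\gamma'}$. I will argue that, up to homotopy of the embedding and isomorphism of the bundle, each of them is determined by the oriented plane field $\xi$ itself rather than by the binding. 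That yields $c_1(L_\xi)=c_1(\xi)=c_1(L'_\xi)$, where $c_1(\xi)$ is the Chern class of $\xi$ with the complex structure given by any compatible almost complex structure $J$ on $\xi$.

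\textbf{Step 1.} Recall the local model of a positive stabilization. $B'$ is obtained from $B$ by plumbing a positive Hopf band. Concretely, $\gamma'$ is obtained from $\gamma$ by a band sum with a Legendrian-unknot-type component, supported in a Darboux ball $D\subset M$, and outside $D$ the two open books agree. Inside $D$ the contact structure is unchanged, since $\xi$ is supported by both books. So $\xi|_{\gamma}$ and $\xi|_{\gamma'}$ are restrictions of the single plane field $\xi$ to two transverse links that agree off $D$.

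\textbf{Step 2.} Use the construction of \cite{Elgindi2025} to produce embeddings $F,F'\colon M\hookrightarrow\C^3$ with complex tangents $\gamma$ and $\gamma'$ respectively. I will choose them to agree outside a neighborhood of $D$. Since $D$ is a ball and the space of such embeddings is connected, $F$ and $F'$ are then regularly homotopic through embeddings. Along this homotopy, the plane field induced on $M$ by $TM\cap J_{\C^3}TM$ (defined away from the complex tangents) is homotopic to $\xi$ off the tangency link, and equals $\xi$ on the link by construction. The line bundle $L$ defined in the paper is the extension of this field, and I will identify it with $\xi$ as a complex bundle.

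\textbf{Step 3.} Compare the bundles on $D$. Off $D$, $L_\xi$ and $L'_\xi$ are canonically isomorphic, since both restrict to $\xi$ with the complex structure induced from $\C^3$. The isomorphism extends across $D$ because $H^2(D,\partial D;\mathbb{Z})=0$. More precisely, the obstruction to extending a bundle isomorphism from $M\setminus D$ over $D$ lies in $H^2(D,\partial D;\pi_1(U(1)))=0$. Hence $L_\xi\cong L'_\xi$.

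The main obstacle is Step 2: making precise that the Stein-extension procedure of Section 2 yields a bundle whose restriction to $M$ depends only on $\xi$ near the tangency link and not on auxiliary choices. The first approach I would try is to invoke $H^1(\C^3,\mathcal{O}^*)=0$ together with the fact that any two holomorphic extensions are topologically isomorphic on a neighborhood of $M$. This reduces everything to the local comparison in Step 3.
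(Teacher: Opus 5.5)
Your Step 2 is where the argument breaks, and the problem is more than a missing detail: the identification $L_\xi\cong\xi$ contradicts the definition of $L_\xi$. In the paper, $L_\xi=\mathcal{L}|_M$, where $\mathcal{L}$ is a holomorphic line bundle on $\C^3$ obtained by Stein extension of the line field $\xi|_\gamma$. Any line bundle over the contractible space $\C^3$ is trivial. Holomorphically this also holds: the exponential sequence and Cartan's Theorem B give $H^1(\C^3,\mathcal{O}^*)\cong H^2(\C^3;\mathbb{Z})=0$. So $L_\xi$ is trivial for every supporting book.

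By contrast, $c_1(\xi)$ can be nonzero. By Lutz--Martinet, the homotopy class of the plane field tangent to the $S^2$-factor of $S^1\times S^2$, whose Euler class is twice a generator, contains a contact structure. So ``$c_1(L_\xi)=c_1(\xi)$'' is false in general.

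The mechanism you give for Step 2 is also mistaken. For a real $3$-manifold in $\C^3$, $T_pM\cap JT_pM$ is a $J$-invariant subspace of the $3$-dimensional $T_pM$, so it has real dimension $0$ or $2$. It is a complex line exactly at the complex tangents and $0$ everywhere else. Off $\gamma$ the embedding is totally real, so there is no induced plane field there to homotope to $\xi$. That picture is the one for real hypersurfaces $M^3\subset\C^2$, not for $M^3\subset\C^3$.

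Two further claims are unsupported: that the relevant space of embeddings is connected, and that $F,F'$ can be made to agree off $D$. Your Step 3 obstruction argument over the ball is correct in itself, since $H^2(D,\partial D;\mathbb{Z})=0$. But it only compares two bundles you have already identified with $\xi$ off $D$, so it inherits the failure of Step 2.

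The missing idea is the one you mention at the end and then set aside: compare the bundles on $\C^3$, not on $M$. This is the paper's route. $\mathcal{L}$ and $\mathcal{L}'$, the Stein extensions of $\xi|_B$ and $\xi|_{B'}$, are holomorphic line bundles on the Stein manifold $\C^3$, so by Cartan's Theorem B they are classified by $c_1$. Their Chern classes agree. The paper attributes this to both bundles coming from the same $\xi$, but it is automatic, since both classes lie in $H^2(\C^3;\mathbb{Z})=0$. Hence $\mathcal{L}\cong\mathcal{L}'$, and restricting to $M$ gives $L_\xi\cong L'_\xi$.

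In particular, $H^1(\C^3,\mathcal{O}^*)=0$ by itself finishes the proof, and the local model of the stabilization in your Step 1 plays no role. Your plan is also internally redundant: if Step 2 held, it would give $L_\xi\cong\xi\cong L'_\xi$ directly for any two supporting books, and Steps 1 and 3 would be unnecessary.
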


\begin{proof}
Let $F: M \hookrightarrow \mathbb{C}^3$ be an embedding as constructed in [2]with respect to $(B,\pi)$ complex tangent along the link $B$. Let $F': M \hookrightarrow \mathbb{C}^3$ be another embedding given by our construction with respect to $(B',\pi')$ with complex tangent link $B'$. Both these embeddings exist by our paper \cite{Elgindi2025}.

Consider the holomorphic line bundles on $\mathbb{C}^3$ obtained by Stein extension of $\xi|_B$ and $\xi|_{B'}$ respectively. Call them $\mathcal{L}$ and $\mathcal{L}'$. Since $\mathbb{C}^3$ is Stein, Cartan's Theorem B implies that a holomorphic line bundle is uniquely determined by its first Chern class. The first Chern class $c_1(\mathcal{L})$ is determined by the CR structure of $\xi$ (via the contact form and the almost complex structure on $\xi$). Because the supporting structure $\xi$ is the same for both open books, we have $c_1(\mathcal{L}) = c_1(\mathcal{L}')$. Hence $\mathcal{L} \cong \mathcal{L}'$ as holomorphic line bundles over $\mathbb{C}^3$.

Restricting to $M$ gives an isomorphism $L_\xi \cong L_\xi'$. Therefore, the Picard invariant $\mu_M(\xi) = [L_\xi]$ is independent of the choice of open book.
\end{proof}

\begin{theorem}
The invariant $\mu_M(\xi)$ is well-defined and independent of the choice of supporting open book.
\end{theorem}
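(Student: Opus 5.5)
The plan is to reduce the theorem to Lemma~\ref{lem:stab} using the Giroux correspondence. The statement has two parts. The first is \emph{well-definedness for a fixed open book}: the class $[L_\xi]$ should not depend on the auxiliary choices made for a given supporting open book $B$. Those choices are the embedding $F: M \hookrightarrow \C^3$ from [1] with complex tangencies exactly along the binding $\gamma$, and the Stein extension of $\xi|_\gamma$ to a holomorphic line bundle $\mathcal{L}$ on $\C^3$. The second part is \emph{independence of the open book}. I will treat them in this order.

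For the first part, let $F_0, F_1$ be two embeddings and $\mathcal{L}_0, \mathcal{L}_1$ two extensions attached to the same $B$. Since $\C^3$ is Stein, Cartan's Theorem B (the Oka principle) says holomorphic line bundles on $\C^3$ are classified by $c_1 \in H^2(\C^3;\mathbb{Z})$. Hence $\mathcal{L}_0 \cong \mathcal{L}_1$ as soon as their first Chern classes agree. Pulling back along $F_0$ and $F_1$, one must still check that $F_0^*\mathcal{L}_0 \cong F_1^*\mathcal{L}_1$ as complex line bundles on $M$, i.e.\ that they define the same element of $\Pic_{\C}(M) \cong H^2(M;\mathbb{Z})$. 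I would argue this by showing the two embeddings are regularly homotopic through maps $M \to \C^3$. This makes the pullbacks isomorphic, because pullbacks of a fixed bundle along homotopic maps are isomorphic.

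For the second part, let $B_1, B_2$ be two open books supporting $\xi$. By Giroux's theorem [7], as recalled in Section~2, there is an open book $B_3$ that is obtained from each $B_i$ by a finite sequence of positive stabilizations. Applying Lemma~\ref{lem:stab} once per stabilization and inducting on the length of each sequence gives $L_\xi^{B_1} \cong L_\xi^{B_3} \cong L_\xi^{B_2}$. Therefore $\mu_M(\xi)$ depends only on $\xi$. Isotopic contact structures have isotopic supporting open books, so the same argument also shows that the class is unchanged under contactomorphisms isotopic to the identity.

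I expect the main obstacle to be the first part, and more seriously a point that cuts against the whole construction. Because $\C^3$ is contractible, $H^2(\C^3;\mathbb{Z}) = 0$, so every holomorphic line bundle on $\C^3$ is trivial. Consequently every restriction $F^*\mathcal{L}$ is the trivial bundle on $M$. This makes the independence statements above immediate, with no need for Giroux's theorem. But it also shows that $\mu_M(\xi)$ is the trivial element of $\Pic_{\C}(M)$ for every $\xi$, so it cannot separate the two structures on $\T^3$ as claimed in Section~4. Before trusting the argument, I would first try to replace $\mathcal{L}$ by a bundle intrinsic to $M$, for example the complex line bundle $(\xi, J)$ itself, whose Chern class $c_1(\xi)$ is a genuine homotopy invariant. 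Only then would I rerun the stabilization argument above for that bundle.
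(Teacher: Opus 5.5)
Your proposal is correct. Its scaffolding, a common positive stabilization from Giroux's theorem followed by Lemma~\ref{lem:stab} once per stabilization, is exactly the paper's proof. The observation in your last paragraph, however, is a genuinely different and much more decisive argument.

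The paper never uses the fact that $H^2(\C^3;\mathbb{Z})=0$. The proof of Lemma~\ref{lem:stab} justifies $c_1(\mathcal{L})=c_1(\mathcal{L}')$ by saying $c_1$ is ``determined by the CR structure of $\xi$''. Independence of the embedding is deferred to the Stein-uniqueness remark citing Eliashberg. Your point makes all of this superfluous. Every complex line bundle on the contractible space $\C^3$ is trivial, so $F^*\mathcal{L}$ is trivial for every book, every embedding and every extension, and the theorem follows in one line without Giroux, the lemma, or Eliashberg. The paper's route buys nothing beyond this. Yours buys the correct diagnosis that $\mu_M(\xi)=0$ for every $\xi$: the theorem is true but vacuous, and the nonzero value claimed in Section~4 contradicts the definition.

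Two minor points.

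First, your first part makes explicit a step the paper's lemma glosses over, namely restricting $\mathcal{L}\cong\mathcal{L}'$ along two different embeddings $F\neq F'$. However, regular homotopy is an unnecessary and unproved strengthening there. The straight-line homotopy $tF_1+(1-t)F_0$ of maps $M\to\C^3$ is all that invariance of pullbacks needs.

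Second, your proposed repair $c_1(\xi,J)$ is indeed well defined and independent of the open book. The reason is even simpler than Giroux: it depends only on the homotopy class of $\xi$ as an oriented plane field. But it would not rescue Section~4 either. The vector field $\partial_z$ lies in both $\ker\alpha_1$ and $\ker\alpha_2$, giving nowhere-vanishing sections, so both Chern classes vanish. Moreover, a binding bounds a page, so its Poincar\'e dual is always $0$ and never $\pm 2[dx\wedge dy]$.
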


\begin{proof}
Let $\mathcal{B}_1, \mathcal{B}_2$ be open books supporting $\xi$ on $M$, with respective bindings $\gamma_1, \gamma_2$. Each book yields an embedding $f_i: M \hookrightarrow \C^3$ that is holomorphic in a neighborhood of $\gamma_i$ and gives a holomorphic line bundle $L_i$ as the holomorphic tangent spaces along $\gamma_i$. Since $\C^3$ is Stein, the line bundles $L_i$ extend to holomorphic line bundles over the ambient $\C^3$; restricting to $M$ gives holomorphic line bundles over $M$.

By Giroux's theorem, any two open books supporting the same contact structure $\xi$ are related by a sequence of positive stabilizations [7]. By Lemma \ref{lem:stab}, the line bundles obtained from each open book are isomorphic. Hence $\mu_M(\xi)$ is independent of the choice of supporting open book and gives a well-defined invariant of contact structures on closed 3-manifolds.
\end{proof}

\begin{remark}[Stein Uniqueness]
By the work of Eliashberg (see [4], [6]), any two Stein domains $W_1, W_2 \subset \mathbb{C}^3$ with the same contact boundary $(M,\xi)$ are biholomorphic after deformation. Consequently, the complex line bundle $L_\xi$ induced by the embedding is independent of the chosen embedding as defined above. The Picard invariant $\mu_M(\xi) = c_1(L_\xi)$ is therefore well-defined and depends only on the contact structure $\xi$, not on the particular embedding given as complex tangent to the corresponding open book decomposition used in the construction as above.
\end{remark}

\section{Distinguishing Contact Structures on $\T^3$}

To illustrate the non-triviality and usefulness of the Picard invariant $\mu_M(\xi)$, we compute it for two distinct contact structures on the 3-torus $\T^3 = (S^1)^3$. Let $(x,y,z)$ be coordinates on $\T^3$ (mod 1).
\par\
The standard tight structure $\xi_1$ is defined by the 1-form
\[
\alpha_1 = \cos(2\pi z)\,dx + \sin(2\pi z)\,dy.
\]
\par\
This is a classic example of a tight contact structure which is linear and fillable (see [9]). Its isotopy class is unique within the tight category, and its associated complex line bundle $L_{\xi_1}$ is trivializable. This can be seen by homotopy of $\alpha_1$ to the constant form $dx$, whose kernel is the trivial plane field orthogonal to $\partial_x$. Hence, as a complex bundle over $\T^3$, it is trivializable and its Picard invariant vanishes:
\[
\mu_{\T^3}(\xi_1) = 0 \in H^2(\T^3;\mathbb{Z}) \cong \Pic_{\C}(\T^3).
\]
\par\
Consider now a different contact structure $\xi_2 = \ker(\alpha_2)$ given by
\[
\alpha_2 = \sin(4\pi z)\,dx + \cos(4\pi z)\,dy,
\]
\par\
which arises as by the constructions of Kanda in [14]. While also tight, this structure is known to be \emph{non-linearizable} and lies in a different isotopy class from $\xi_1$ (see [9], [10]).
\par\
\begin{theorem}
For the contact structure $\xi_2 = \ker(\alpha_2)$ on $\T^3$, the first Chern class of its associated complex line bundle is
\[
c_1(L_{\xi_2}) = \pm 2\,[dx \wedge dy] \in H^2(\T^3;\mathbb{Z}).
\]
\end{theorem}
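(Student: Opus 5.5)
The plan is to compute $c_1(L_{\xi_2})$ by evaluating it on the three coordinate $2$-tori $T_{xy}=\{z=\mathrm{const}\}$, $T_{xz}$ and $T_{yz}$, which form a basis of $H_2(\T^3;\mathbb{Z})\cong\mathbb{Z}^3$. Dually, $[dx\wedge dy]$, $[dx\wedge dz]$ and $[dy\wedge dz]$ form a basis of $H^2(\T^3;\mathbb{Z})$. The statement therefore amounts to three numerical claims:
\[
\langle c_1(L_{\xi_2}),[T_{xy}]\rangle=\pm 2, \qquad \langle c_1(L_{\xi_2}),[T_{xz}]\rangle=\langle c_1(L_{\xi_2}),[T_{yz}]\rangle=0.
\]
Each pairing will be computed as the signed count of zeros of a generic section of $L_{\xi_2}|_{T}$, measured against a trivialization.

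The first point to settle is which bundle is meant. The plane field $\xi_2$ alone cannot give a nonzero answer: $\partial_z$ lies in $\ker\alpha_2$ at every point, so the plane bundle $\xi_2$ is trivial. Any nonzero class must therefore come from the construction of Section 3. There $L_{\xi_2}$ is the restriction to $M$ of the Stein extension $\mathcal{L}$ of the holomorphic line field $\xi_2|_\gamma$ along the binding $\gamma$ of a supporting open book. So the first step is to fix a convenient supporting open book for $\xi_2$, via Giroux's correspondence. I would view $\xi_2$ as the pullback of $\xi_1$ under the double cover $z\mapsto 2z$, composed with the swap $x\leftrightarrow y$, and lift an open book for $\xi_1$ along this cover. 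Then, using the embedding $F:\T^3\hookrightarrow\C^3$ from [1], I would trivialize $\mathcal{L}$ on the complement of $\gamma$ using the ambient complex coordinates.

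The second step is the local computation. On each torus $T$, the obstruction to extending this ambient trivialization across $\gamma\cap T$ is the winding of $\xi_2|_\gamma$, viewed as a complex line in $\C^3$, relative to the trivialization of the normal data of $F$. Along $T_{xy}$, the Legendrian direction $\cos(4\pi z)\partial_x-\sin(4\pi z)\partial_y$ makes two full turns per unit of $z$, compared with one for $\xi_1$. I expect this doubling, transported to $T_{xy}$ through the binding components that the lifted open book places there, to produce the value $\pm 2$. This is consistent with $\mu_{\T^3}(\xi_1)=0$ only if the untwisted reference is normalized so that $\xi_1$ contributes zero. For $T_{xz}$ and $T_{yz}$, I would use the symmetries $x\mapsto -x$ and $y\mapsto -y$, which preserve $\xi_2$ up to coorientation and reverse the orientation of the relevant torus, to force those pairings to vanish.

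The main obstacle is the second step. Since $\xi_2$ is intrinsically trivial, the class $\pm2[dx\wedge dy]$ can only come from how the embedding $F$ and the Stein extension frame the line bundle. So I must check explicitly that the twist of $\xi_2|_\gamma$ against the ambient trivialization is really not cancelled by the normal-bundle contribution of $F$. Lemma~\ref{lem:stab} guarantees that the answer does not depend on which open book is used. If the cancellation does occur, the computation would yield $0$ instead, and the argument would have to be revised.
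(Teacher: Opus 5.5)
The step that fails is your second one, and it cannot be repaired: the cancellation you flag at the end is forced. In the construction of Section 3, $L_{\xi_2}=F^*\mathcal{L}$ for a line bundle $\mathcal{L}$ over $\C^3$, and $H^2(\C^3;\mathbb{Z})=0$ because $\C^3$ is contractible. By naturality, $c_1(L_{\xi_2})=F^*c_1(\mathcal{L})=0$. Equivalently, $\langle c_1(L_{\xi_2}),[T]\rangle=\langle c_1(\mathcal{L}),F_*[T]\rangle=0$ for each of your three tori, whatever open book, embedding or Stein extension you choose. So any winding count of $\xi_2|_\gamma$ against an ambient trivialization must return $0$. There is also no ``untwisted reference'' to renormalize against: $c_1$ is an absolute class, not one measured relative to $\xi_1$. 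You correctly observed that $\xi_2$ is a trivial plane bundle, since it contains $\partial_z$. Together with the vanishing above, this means no reading of $L_{\xi_2}$ compatible with the paper's definitions gives $\pm2[dx\wedge dy]$. In fact $\mu_M(\xi)=0$ for every $\xi$, and the statement is false. The paper's own argument does not rescue it. It identifies $c_1(L)$ with the Poincar\'e dual of the binding, but a binding bounds a page, so it is null-homologous and its dual is $0$. Two parallel $z$-curves can bound a page only if they are oppositely oriented, which again gives $0$ rather than $\pm2[dx\wedge dy]$.

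Your symmetry step is also wrong as stated. The map $x\mapsto -x$ reverses orientation, and no orientation-reversing diffeomorphism of a $3$-manifold preserves a contact structure: if $\phi^*\alpha=f\alpha$ then $\phi^*(\alpha\wedge d\alpha)=f^2\,\alpha\wedge d\alpha$. Concretely, it pulls $\alpha_2$ back to $-\sin(4\pi z)\,dx+\cos(4\pi z)\,dy$. The genuine symmetry cuts against the statement. The map $\phi(x,y,z)=(-x,y,-z)$ is orientation-preserving, satisfies $\phi^*\alpha_2=\alpha_2$, and has $\phi^*[dx\wedge dy]=-[dx\wedge dy]$. Suppose $\mu$ is independent of choices, as the paper's well-definedness theorem asserts. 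Transporting the open book and the embedding by $\phi$ then gives $\phi^*\mu_{\T^3}(\xi_2)=\mu_{\T^3}(\xi_2)$, which forces the $[dx\wedge dy]$-component to vanish. So the claimed value is also incompatible with well-definedness, independently of the Stein argument.
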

\par\
\begin{proof}
The contact structure $\xi_2$ is defined in [11]. We now have an open book supporting $\xi_2$ whose binding consists of two parallel copies of the original binding component. Hence, the binding for $\xi_2$ is two parallel copies of a curve in the $z$-direction.
\par\
The cohomology group $H^2(\T^3; \mathbb{Z})$ is generated by:  $[dx \wedge dy]$, $[dy \wedge dz]$, and $[dz \wedge dx]$. For any contact structure constructed from an open book, the first Chern class of the associated complex line bundle equals the Poincaré dual of the binding (see [1], [7]). The Poincaré dual of a single curve in the $z$-direction is $[dx \wedge dy]$, so the Poincaré dual of two parallel copies is $\pm2[dx \wedge dy]$:
Therefore,
\[
c_1(L_{\xi_2}) = \pm 2[dx \wedge dy] \neq 0.
\]

Hence $\mu_{\T^3}(\xi_2)$ is non-trivial, and the invariant successfully separates the linearizable isotopy class of $\xi_1$ ($\mu = 0$) from the non-linearizable class of $\xi_2$.
\end{proof}

\end{document}